\documentclass[oupthm]{CUP-JNL-BCM}%

\usepackage{amsmath,amssymb,amsfonts}
\usepackage{mathrsfs}
\usepackage{rotating}
\usepackage[numbers]{natbib}

\theoremstyle{oupplain}
\newtheorem*{theorem}{Theorem}

\theoremstyle{oupproof}
\newtheorem{proof}{Proof}


\newcommand{\TT}{{\mathbb T}}
\newcommand{\DD}{{\mathbb D}}

\articletype{RESEARCH ARTICLE}
\jname{Canadian Mathematical Society}
\jyear{2020}

\begin{document}

\begin{Frontmatter}

\title[Decay of singular inner functions]{On the decay of singular inner functions}

\author{Thomas Ransford\thanks{Research supported by grants from NSERC and the Canada research chairs program}}

\address{\orgname{D\'epartement de math\'ematiques et de statistique, Universit\'e Laval}, \orgaddress{\street{Qu\'ebec}, \state{QC}, \postcode{G1V 0A6}}\email{ransford@mat.ulaval.ca}}

\keywords[2020 Mathematics Subject Classification]{primary 30J15, secondary 28A78}

\keywords{singular inner function, Hausdorff measure}

\abstract{
It is known that, if $S(z)$ is a non-constant, singular inner function defined on the unit disk, then
$\min_{|z|\le r}|S(z)|\to0$ as $r\to1^-$. We show that the convergence may be arbitrarily slow.
}

\end{Frontmatter}


A \emph{singular inner function} on the unit disk $\DD$ is a function of the form
\begin{equation}\label{E:singinner}
S(z):=\exp\Bigl(-\int_\TT \frac{e^{i\theta}+z}{e^{i\theta}-z}\,d\mu(\theta)\Bigr) \quad(z\in\DD),
\end{equation}
where $\mu$ is a positive finite measure on the unit circle $\TT$ that is singular with respect to linear Lebesgue measure.
Such functions play an important role in the theory of Hardy spaces $H^p$. For example, according to the 
so-called canonical factorization theorem, every function in $H^p (0<p\le\infty)$ that is not identically zero can be factorized in an essentially unique way
as the product of an outer function in $H^p$, a Blaschke product and a singular inner function 
(see e.g.\ \cite[Theorem~1.12]{Ma13}).

As the name suggests, a singular inner function $S$ is inner, i.e.,
it is a bounded holomorphic function whose radial limits satisfy $|S^*(e^{i\theta})|=1$
Lebesgue-a.e. on~$\TT$.
In addition, the fact of being singular implies that
$S$ also satisfies
$S^*(e^{i\theta})=0$ $\mu$-a.e. on~$\TT$
(see e.g.\ \cite[Lemma~1.8]{Ma13}).
In particular, 
if $S$ is non-constant (so $\mu\not\equiv0$), then $S$ has at least one radial limit equal to zero,
and hence
\begin{equation}\label{E:convto0}
\min_{|z|\le r}|S(z)|\to 0 \quad (r\to 1^-).
\end{equation}

Using the formula \eqref{E:singinner}, it is easy to show that
\begin{equation}\label{E:fastconv}
\inf_{0\le r<1}\Bigl(\min_{|z|\le r}|S(z)|\Bigr)^{1-r}>0.
\end{equation}
This imposes a constraint on how rapidly the minimum in \eqref{E:convto0} can converge to zero,
and in fact this constraint is optimal (consider the case when $\mu$ is a point mass).
In this note, we consider how \emph{slowly} the minimum in \eqref{E:convto0} can converge to zero.
The following theorem shows that the convergence may be arbitrarily slow.

\medskip

\begin{theorem}\label{T:mainresult}
Let $\omega:[0,1)\to(0,1)$ be a function such that $\sup_{r\in[0,1)}\omega(r)<1$ and $\lim_{r\to1^-}\omega(r)=0$.
Then there exists a non-constant singular inner function $S$ on $\DD$ such that
\begin{equation}\label{E:estimate}
\min_{|z|\le r} |S(z)|\ge \omega(r) \quad(r\in[0,1)).
\end{equation}
\end{theorem}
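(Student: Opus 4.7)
The plan is to produce a nonzero positive singular measure $\mu$ on $\TT$ whose Poisson integral
\[
P\mu(z) := \int_\TT \frac{1-|z|^2}{|e^{i\theta}-z|^2}\,d\mu(\theta)
\]
grows slowly enough, and then take $S := S_\mu$ as in \eqref{E:singinner}, exploiting the identity $|S_\mu(z)|=\exp(-P\mu(z))$. A preliminary reduction, replacing $\omega$ by $r\mapsto\sup_{s\in[r,1)}\omega(s)$, allows one to assume $\omega$ is nonincreasing (the hypotheses are preserved); under this assumption, \eqref{E:estimate} is implied by the pointwise bound $|S(z)|\ge\omega(|z|)$ throughout $\DD$. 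Writing $\psi := -\log\omega$, the task becomes the construction of a nonzero positive singular measure $\mu$ on $\TT$ with $P\mu(z)\le\psi(|z|)$ on $\DD$, where $\psi$ is nondecreasing, bounded below by a positive constant, and satisfies $\psi(r)\to+\infty$ as $r\to 1^-$.

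For the measure I would use the natural probability measure $\mu$ on a homogeneous Cantor subset $K\subset\TT$ built as follows: at stage $n$, $K$ is a disjoint union of $2^n$ arcs of common length $\ell_n$, and $\mu$ assigns mass $2^{-n}$ to each such arc. The scales are chosen by the rule $2^n\ell_n\,\psi(1-\ell_n)=1$; since $\psi(1-\ell_n)\to+\infty$, this forces $2^n\ell_n\to 0$, so $K$ is Lebesgue-null and $\mu$ is nonzero and singular. A direct counting argument then gives the uniform distributional bound
\[
\mu(B(e^{i\varphi},h)) \le C_1\,h\,\psi(1-h)
\]
for every $\varphi\in\RR$ and $h\in(0,1]$, with $C_1$ an absolute constant.

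The crux is to upgrade this density bound on $\mu$ to a pointwise bound on $P\mu$. Set $t:=1-r$ and decompose $\TT$ into the dyadic shells $A_k := \{e^{i\theta} : 2^{k-1}t\le|\theta-\varphi|<2^k t\}$ for $k\ge 1$ and $A_0 := \{|\theta-\varphi|<t\}$. On $A_k$ the Poisson kernel is dominated by a constant multiple of $2^{-2k}/t$, while
\[
\mu(A_k)\le \mu(B(e^{i\varphi},2^k t))\le C_1\,2^k t\,\psi(1-2^k t)\le C_1\,2^k t\,\psi(r)
\]
by the density bound and the monotonicity of $\psi$. Integrating over $A_k$ and summing the resulting geometric series in $k$ yields $P\mu(z)\le C_2\,\psi(|z|)$ for an absolute constant $C_2$. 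Rescaling $\mu$ to $\mu/C_2$ and defining $S := S_{\mu/C_2}$ from \eqref{E:singinner} then produces the required singular inner function.

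The main obstacle is coordinating the mass distribution of $\mu$ at each scale with the growth of $\psi$ at the matching radius; this is precisely what dictates the choice $2^n\ell_n\,\psi(1-\ell_n)=1$. Once the density bound is in place with the correct dependence on $\psi(1-h)$, the Poisson estimate is a routine dyadic summation and the final rescaling is trivial. Note that the hypothesis $\sup_r\omega(r)<1$ is what keeps $\psi$ bounded below by a positive constant, ensuring that the Cantor construction can be started with $\ell_1<1$ and that the rescaling in the last step produces a nontrivial measure.
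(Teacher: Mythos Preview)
Your plan follows the same two–step skeleton as the paper---produce a nonzero singular measure $\mu$ with a controlled modulus of continuity, then convert this into a bound on its Poisson integral---but the implementations diverge. The paper sets up an increasing gauge $h(t)=\min\{\sqrt t,\inf_{s\in[t,\pi]}sg(s)\}$, invokes Hausdorff--measure theory and Frostman's lemma to obtain $\mu$ non-constructively with $\mu([\theta-t,\theta+t])\le h(t)$, and then bounds $-\log|S|$ by splitting the Poisson integral at a single cutoff $\epsilon=\pi(1-r)$ and integrating by parts. You instead build $\mu$ by hand as a Cantor measure and estimate the Poisson integral by a dyadic shell decomposition. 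Both routes are legitimate; yours is more explicit, the paper's is cleaner because Frostman delivers the density bound for free.

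The genuine gap in your proposal is the ``direct counting argument'' for $\mu(B(e^{i\varphi},h))\le C_1\,h\,\psi(1-h)$. Your scale rule $2^n\ell_n\psi(1-\ell_n)=1$ only pins down the mass at the discrete scales $h=\ell_n$; for intermediate $h\in[\ell_{n+1},\ell_n)$ the best you get from counting is $\mu(B)\lesssim 2^{-n}=\ell_n\psi(1-\ell_n)$, and this is \emph{not} dominated by $h\psi(1-h)$ unless $t\mapsto t\psi(1-t)$ is (essentially) increasing---which nothing in your reduction guarantees. Two related failures: if $\psi(1-\ell)\gtrsim 1/\ell$ the scale equation has no small solutions at all, and if $\psi$ is locally constant then $\ell_{n-1}/\ell_n=2$ exactly and the ``Cantor'' construction collapses to an interval (so $\mu$ is not singular). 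This monotonicity issue is precisely why the paper replaces $tg(t)$ by $\inf_{s\ge t}sg(s)$ and caps it with $\sqrt t$. Your argument can be repaired the same way: first enlarge $\omega$ so that $t\mapsto t\psi(1-t)$ is continuous, strictly increasing, and tends to $0$; then the $\ell_n$ exist, satisfy $\ell_{n-1}/\ell_n>2$ uniformly (after a further harmless tweak to keep ratios in a fixed range), the counting bound propagates to all $h$ with an absolute $C_1$, and your dyadic summation and final rescaling go through as written.
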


\begin{proof}
Replacing $\omega$ by a slightly larger, piecewise-linear function, if necessary,
we may assume that, in addition to its other properties, $\omega$ is a   continuous, decreasing
function on $[0,1)$.

Define two auxiliary functions $g,h:(0,\pi]\to(0,\infty)$ by
\[
g(t):=\frac{1}{3\pi}\log\frac{1}{\omega(1-t/\pi)}
\quad\text{and}\quad
h(t):=\min\Bigl\{\sqrt{t},\inf_{s\in[t,\pi]}sg(s)\Bigr\}.
\]
Then $g$ is a positive, continuous, decreasing function on $(0,\pi]$ with $\lim_{t\to 0^+}g(t)=\infty$,
and $h$ is a positive, continuous, increasing function on $(0,\pi]$ such that $\lim_{t\to0^+}h(t)=0$.
In addition, we clearly have $h(t)\le tg(t)$ for all $t\in(0,\pi]$.
Notice also that, for $t\in(0,1]$,
\[
\frac{h(t)}{t}
=\min\Bigl\{\frac{\sqrt{t}}{t},\inf_{s\in[t,\sqrt{t}]}\frac{sg(s)}{t},\inf_{s\in[\sqrt{t},\pi]}\frac{sg(s)}{t}\Bigr\}
\ge \min\Bigl\{\frac{1}{\sqrt{t}},g(\sqrt{t}),\frac{g(\pi)}{\sqrt{t}}\Bigr\},
\]
and so $\lim_{t\to0^+}h(t)/t=\infty$.

The properties of $h$ (continuous, increasing, $\lim_{t\to0+}h(t)=0$)
make it a so-called measure function,  
which can be used to define an $h$-Hausdorff measure, $\Lambda_h$
(see e.g.\ \cite[Chapter~II]{Ca67}).
We shall consider $\Lambda_h$ on the unit circle $\TT$, 
thought of as a metric space with the arclength distance.

Since $\lim_{t\to 0^+}h(t)/t=\infty$ and $\TT$ has positive linear measure, we have $\Lambda_h(\TT)>0$
(in fact even $\Lambda_h(\TT)=\infty$). By \cite[Chapter~II, Theorem~3]{Ca67}, there exists a closed subset
$E$ of $\TT$ such that $0<\Lambda_h(E)<\infty$. Again using the fact that $\lim_{t\to 0^+}h(t)/t=\infty$,
the inequality $\Lambda_h(E)<\infty$ implies that $E$ is of linear measure zero.

Now we apply a theorem of Frostman, \cite[Chapter~II, Theorem~1]{Ca67}. The fact that $\Lambda_h(E)>0$
implies that there is a finite positive measure $\mu$ on $E$ such that $\mu(E)>0$ and
$\mu([\theta-t,\theta+t])\le h(t)$ for all $\theta\in[-\pi,\pi]$ and $t\in(0,\pi]$.
Since $E$ is of linear measure zero, $\mu$ is singular with respect to Lebesgue measure.
Define $S$ by \eqref{E:singinner}. Then $S$ is a 
non-constant, singular inner function, and we shall show that it satisfies the
estimate \eqref{E:estimate}.

Fix $r\in[0,1)$ and $\theta\in[-\pi,\pi]$. We have
\[
-\log|S(re^{i\theta})|=\int_\TT P_r(t-\theta)\,d\mu(t),
\]
where $P_r(\cdot)$ denotes the Poison kernel, namely
\[
P_r(t-\theta)=\frac{1-r^2}{1+r^2-2r\cos(t-\theta)}.
\]
We estimate the integral by splitting it into two,
according to whether $|t-\theta|$ is smaller than or greater than $\epsilon$. Here $\epsilon\in(0,\pi]$ is
a constant, to be chosen later.
Firstly, we have
\begin{align*}
\int_{|t-\theta|\le\epsilon}P_r(t-\theta)\,d\mu(t)
&\le \int_{|t-\theta|\le\epsilon}\frac{1-r^2}{(1-r)^2}\,d\mu(t)\\
&\le \frac{1-r^2}{(1-r)^2}\mu([\theta-\epsilon,\theta+\epsilon])\\
&\le \frac{1+r}{1-r}h(\epsilon)\le\frac{2}{1-r}\epsilon g(\epsilon).
\end{align*}
Secondly, using integration by parts, we have
\begin{align*}
\int_{|t-\theta|>\epsilon}(P_r(t-\theta)-P_r(\pi))\,d\mu(t)
&=\int_{|t-\theta|>\epsilon}\int_{s=|t-\theta|}^\pi -P_r'(s)\,ds\,d\mu(t)\\
&=\int_{s=\epsilon}^\pi\int_{|t-\theta|\le s} -P_r'(s)\,d\mu(t)\,ds\\
&=\int_{s=\epsilon}^\pi\mu([\theta-s,\theta+s]) (-P_r'(s))\,ds\\
&\le \int_{s=\epsilon}^\pi h(s) (-P_r'(s))\,ds\\
&\le \int_{s=\epsilon}^\pi g(s) (-sP_r'(s))\,ds\\
&\le g(\epsilon)\int_{s=0}^\pi  -sP_r'(s)\,ds\\
&=-\pi P_r(\pi)g(\epsilon)+ g(\epsilon)\int_0^\pi P_r(s)\,ds \\
&= -\pi P_r(\pi)g(\epsilon)+\pi g(\epsilon),
\end{align*}
whence
\begin{align*}
\int_{|t-\theta|>\epsilon}P_r(t-\theta)\,d\mu(t)
&\le -\pi P_r(\pi)g(\epsilon)+\pi g(\epsilon)+P_r(\pi)\mu(\TT)\\
&\le -\pi P_r(\pi)g(\epsilon)+\pi g(\epsilon)+P_r(\pi)\pi g(\pi)\le \pi g(\epsilon).
\end{align*}
Putting these estimates together, we obtain
\[
-\log|S(re^{i\theta})|\le \Bigl(\frac{2\epsilon}{1-r}+\pi\Bigr)g(\epsilon).
\]
In particular, taking $\epsilon=\pi(1-r)$, we get
\[
-\log|S(re^{i\theta})|\le 3\pi g(\pi(1-r))=\log\frac{1}{\omega(r)}.
\]
Rearranging this equality and minimizing over $\theta$ yields
\[
\min_{|z|= r}|S(z)|\ge \omega(r),
\]
which is equivalent to \eqref{E:estimate}, by the minimum principle.
\end{proof}

\textbf{Acknowledgement.}
The theorem above was proved in order to solve a problem posed to me by Joe Cima.
I am grateful to Joe for drawing my attention to this question.

\begin{Backmatter}

\printaddress

\end{Backmatter}

\end{document}